\documentclass[runningheads]{llncs}
\usepackage{enumitem}   
\usepackage{amsmath,amssymb,amsfonts}
\usepackage{graphicx}
\usepackage{xcolor}

\usepackage{nopageno}

\newtheorem{thm}{Theorem}

\newtheorem{lem}[thm]{Lemma}
\newtheorem{conj}[thm]{Conjecture}
\newtheorem{prob}[thm]{Problem}
\newtheorem{obs}[thm]{Observation}

\newtheorem{defin}[thm]{Definition}

\date{}
\title{\bf Decomposition of Geometric Graphs\\ into Star-Forests\thanks{J\'anos Pach's Research partially supported by European Research Council (ERC), grant “GeoScape” No.\ 882971
and by the Hungarian Science Foundation (NKFIH), grant K-131529. Work by Morteza Saghafian is partially supported by the European Research Council (ERC), grant No.\ 788183, and by the Wittgenstein Prize, Austrian Science Fund (FWF), grant No.\ Z 342-N31.}}


\author{J\'anos Pach\inst{1,2}\orcidID{0000-0002-2389-2035} \and
Morteza Saghafian\inst{3}\orcidID{0000-0002-4201-5775} \and
Patrick Schnider\inst{4}\orcidID{0000-0002-2172-9285} }

\authorrunning{J. Pach et al.}

\institute{R\'enyi Institute of Mathematics, Budapest, Hungary \\
\email{pach@cims.nyu.edu}\and
ISTA (Institute of Science and Technology
Austria), Klosterneuburg, Austria \\
\email{pach@cims.nyu.edu}\and
ISTA (Institute of Science and Technology
Austria), Klosterneuburg, Austria \\
\email{morteza.saghafian@ist.ac.at}\and
Department of Computer Science, ETH Z\"{u}rich, Switzerland\\
\email{patrick.schnider@inf.ethz.ch}}


\begin{document}
	\maketitle
	
\begin{abstract}
We solve a problem of Dujmovi\'c and Wood (2007) by showing that a complete convex geometric graph on $n$ vertices cannot be decomposed into fewer than $n-1$ star-forests, each consisting of noncrossing edges. This bound is clearly tight. We also discuss similar questions for abstract graphs.  
    \end{abstract}

\section{Introduction}\label{intro}
To determine the smallest number of subgraphs of some special kind that a graph $G$ can be partitioned into is a large and classical theme in graph theory. In particular, the parts may be required to be matchings (as in Vizing's theorem~\cite{Vi64}), complete bipartite graphs (as in the Graham-Pollak theorem~\cite{GrPo71}), paths and cycles (as in Lov\'asz' theorem~\cite{Lo68}), forests (as in the Nash-Williams theorem~\cite{Nas64}), etc. 
\smallskip

Most likely, it was Erd\H os who first realized that one can ask many interesting new extremal questions for graphs drawn in the plane or in some other surface, if we replace the purely combinatorial conditions by geometric ones; see~\cite{Pa13}. For instance, we may require that the edges participating in a matching or a path do not cross each other~\cite{AvH66}, \cite{Ku79}. In the 80s and 90s, the emergence of Graph Drawing as a separate discipline gave fresh impetus to this line of research. 
\smallskip

A \emph{geometric graph} $G$ is a graph whose vertex set is a set of points in the plane, no 3 of which are collinear, and whose edges are (possibly crossing) line segments connecting certain pairs of vertices. If the vertices of $G$ are in \emph{convex position}, that is, they form the vertex set of a convex polygon, then $G$ is called a \emph{convex geometric graph}. In the sequel, whenever we say that a graph or a geometric graph $G$ \emph{can be decomposed} into certain parts, we mean that its \emph{edge set}, $E(G)$, can be partitioned into such parts. Each part can be regarded as a different \emph{color class} in the corresponding coloring. 

A \emph{star} is a graph consisting of a vertex together with some edges incident to it. In particular, a single vertex is counted as a star. A graph whose every connected component is a star is called a \emph{star-forest}. The (edge set of a) complete graph $K_n$ with $n$ vertices can be decomposed into $n-1$ stars. Akiyama and Kano~\cite{AkK85} proved that fewer stars do not suffice. (This also follows from the Graham-Pollak theorem~\cite{GrPo71}, mentioned above.) However, it was also shown in~\cite{AkK85} that one can decompose $K_n$ into much fewer \emph{star-forests}: one needs only $\lceil n/2\rceil + 1$ of them. Can one also decompose a \emph{complete convex geometric graph} on $n$ vertices into fewer than $n-1$ star-forests, if we insist that each star-forest is a \emph{plane graph}, that is, its edges do not cross each other? This question was raised by Dujmovi\'c and Wood~\cite{DuW07} (Section 10).

The aim of this note is to answer this question in the negative.

\begin{thm}\label{Thm1}
Let $n\ge 1$. The complete convex geometric graph with $n$ vertices cannot be decomposed into fewer than $n-1$ plane star-forests.
\end{thm}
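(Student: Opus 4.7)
The plan is to prove Theorem~\ref{Thm1} by induction on $n$; the base cases $n \in \{1,2\}$ are immediate, since $K_1$ has no edges and $K_2$ consists of a single edge that is itself a plane star-forest.

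For the inductive step, assume the claim for $n-1$ and let $\mathcal{F} = \{F_1, \ldots, F_k\}$ be an arbitrary decomposition of the convex $K_n$ into plane star-forests. My strategy is to find a vertex $v$ of $K_n$ and a forest $F_i \in \mathcal{F}$ such that every edge of $F_i$ is incident to $v$ --- equivalently, $F_i$ consists (up to isolated vertices) of a single star centered at $v$. Given such a pair $(v,F_i)$, deleting $v$ from every forest empties $F_i$ and restricts each $F_j$ with $j\neq i$ to a plane star-forest on the remaining $n-1$ convex points; the resulting $k-1$ forests decompose the convex $K_{n-1}$ on $V(K_n)\setminus\{v\}$, so by the inductive hypothesis $k-1\geq n-2$, and hence $k\geq n-1$, as desired.

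The crux of the proof is therefore the following structural claim, which I would isolate as a key lemma: \emph{in any decomposition of the convex $K_n$ into plane star-forests, at least one part consists of a single non-trivial star (together with any number of isolated vertices).} To prove it, I would argue by contradiction: suppose every $F_j$ contains at least two non-trivial stars. Then each $F_j$ has at least two connected components, so $|E(F_j)|\leq n-2$. Moreover, planarity combined with convex position forces the vertex sets of the non-trivial stars of each $F_j$ to occupy pairwise disjoint arcs of the convex polygon (they must be non-interleaving in the cyclic order, otherwise some pair of star-edges would cross), giving a laminar arc-structure inside each forest. The contradiction is then obtained by combining this geometric restriction with edge counting and, crucially, a recursive application of the inductive hypothesis to the sub-polygons delimited by the arcs of each forest.

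The principal obstacle is precisely making this contradiction argument precise. A direct edge count alone yields only $\binom{n}{2}\leq k(n-2)$, i.e.\ $k\geq n(n-1)/(2(n-2))$, which is too weak to reach $n-1$ once $n\geq 6$. To close this gap one must exploit the arc-structure of each forest and charge the ``cost'' of decomposing the interior of every arc to sub-problems that inherit, via induction, their own lower bound on the number of required plane star-forests. Making the book-keeping watertight --- in particular, handling the interaction between the arc-systems of distinct forests without double-counting --- is where the main technical work lies.
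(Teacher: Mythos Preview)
Your inductive framework matches the paper's: both aim to locate a forest that is a single star, delete its center, and invoke the bound for $K_{n-1}$. The gap is in your key lemma. As you state it --- ``in \emph{any} decomposition of the convex $K_n$ into plane star-forests, at least one part consists of a single non-trivial star'' --- it is simply false. For $n=6$ with vertices $1,\dots,6$ in convex position, take
\[
F_1=\{14,23,56\},\quad F_2=\{25,34,16\},\quad F_3=\{36,12,45\},
\]
\[
F_4=\{13,46\},\quad F_5=\{24,15\},\quad F_6=\{35,26\}.
\]
Each $F_i$ is a plane star-forest with at least two non-trivial components, and together they partition $E(K_6)$. So no amount of ``book-keeping'' on arc structures will rescue the lemma; the obstacle you flag is not a technicality but a reflection of the lemma failing. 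Restricting attention to decompositions with at most $n-2$ forests makes the lemma vacuous (that is exactly the theorem), and restricting to exactly $n-1$ forests leaves you with a statement you have no independent handle on.

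The paper's proof uses the same inductive shell but adds the missing idea: rather than \emph{finding} a single-star forest in the given covering, it \emph{manufactures} one by recoloring edges. Concretely, it introduces a notion of a $k$-edge being ``supported'' (Definition~\ref{supported}) and proves by induction on $k$ (Lemma~\ref{CLAIM}) that one can successively move edges between forests so that every $k$-edge becomes supported, while preserving the plane-star-forest property of each color class. Applying this with $k=n-1$ forces one color class to become a full star at some vertex, after which your deletion step goes through. The recoloring is the substantive content; without it, the induction does not close.
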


On the other hand, there are complete geometric graphs where fewer than $n-1$ plane star-forests suffice: consider $P=A_1\cup A_2\cup A_3\cup A_4$ a point set consisting of four pairwise disjoint sets $A_1,\ldots,A_4$, each of size $k$, such that for every choice $P_1\in A_1,\ldots,P_4\in A_4$ we have that $P_4$ lies inside the convex hull of $P_1, P_2$ and $P_3$. Then, it can be seen that the complete geometric graph on $P$ can be decomposed into $3k=3n/4$ plane star-forests, which come in three families: the first family consists of stars emanating from points in $A_1$ connecting to all points in $A_1$ and $A_2$ together with stars emanating from points in $A_3$ connecting to all points in $A_3$ and $A_4$. Similarly, we draw stars emanating from points in $A_2$ connecting to all points in $A_2$ and $A_3$ and from points in $A_4$ connecting to all points in $A_4$ and $A_1$, and for the last family stars from points in $A_1$ connecting to all points in $A_1$ and $A_3$ and from points in $A_2$ connecting to all points in $A_2$ and $A_4$.

The most important unsolved question in this direction is, how much the bound in Theorem~\ref{Thm1} can be improved if we drop the assumption that the vertices are in convex position. We conjecture that the above example is optimal.

\begin{conj}\label{conj:star_forests}
Let $n\ge 1$. There is no complete geometric graph with $n$ vertices that be decomposed into fewer than $\lceil 3n/4\rceil$ plane star-forests.
\end{conj}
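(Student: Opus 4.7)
The starting point is the edge-counting bound: since every plane star-forest is a forest on $n$ vertices, it has at most $n-1$ edges, yielding $k\ge \binom{n}{2}/(n-1)=n/2$ star-forests in any decomposition. This captures only half of the conjectured bound, so the central task is to gain the remaining factor of $3/2$.

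The refinement I would try uses the sharper inequality $|E(F)|\le n-c(F)$, where $c(F)$ is the number of (nontrivial) stars in a plane star-forest $F$ (the stars being vertex-disjoint). Summing over a decomposition $F_1,\ldots,F_k$ yields
\[
\binom{n}{2}=\sum_i |E(F_i)|\;\le\;nk-\sum_i c(F_i),
\]
so it suffices to establish the averaged lower bound $\sum_i c(F_i)\ge nk/3$: combined with the display this gives $k\ge 3(n-1)/4$, matching $\lceil 3n/4\rceil$ up to integer rounding. The hard part is thus to show that, on average, each plane star-forest of $K_n$ must contain at least $n/3$ stars.

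This averaged statement cannot follow from combinatorics alone, since a single star achieves the maximum $n-1$ edges with $c(F)=1$, so the proof must exploit planarity. One natural attack is LP-duality: produce nonnegative edge weights $w(e)$ with $\sum_{e\in F}w(e)\le 1$ for every plane star-forest $F$ and $\sum_e w(e)\ge 3n/4$; summing over the $F_i$ then yields $k\ge 3n/4$ directly. The weights should concentrate on edges whose endpoints separate many other points, reflecting the intuition that edges lying deep in the arrangement cannot coexist in a single non-crossing star-forest. A complementary approach is induction on $n$, by removing a convex-hull vertex $v$: either $v$ is the center of a large star in some $F_i$ (so that color becomes reusable after deletion), or $v$'s $n-1$ incident edges are spread as leaves across many $F_i$, which already forces many colors by a local count. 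In either approach the principal obstacle is producing a bound that is independent of the specific configuration, since general position admits very diverse point sets; the extremal example with a deep inner quarter $A_4$ suggests the proof must quantify, in some averaged sense, how many points separate a typical pair of vertices.
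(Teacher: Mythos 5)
First, a framing point: the statement you are proving is Conjecture~\ref{conj:star_forests}, which the paper leaves \emph{open} --- the paper offers only the extremal construction (the sets $A_1,\ldots,A_4$ with $A_4$ deep inside the others) showing that $\lceil 3n/4\rceil$ is achievable, and no lower-bound proof. So your proposal must be judged on its own, and as written it is a research plan rather than a proof: the counting bound $k\ge n/2$ is correct, but everything beyond it is deferred (``it suffices to establish\ldots'', ``one natural attack\ldots'', ``a complementary approach\ldots''), with no dual weights exhibited and no induction carried through.

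More seriously, the key lemma on which your first and most concrete approach rests is false. You require $\sum_i c(F_i)\ge nk/3$, i.e., that the star-forests in a decomposition contain on average at least $n/3$ stars. But the paper's extremal construction decomposes $K_n$ into $k=3n/4$ plane star-forests each consisting of \emph{exactly two} stars (the paper notes this explicitly), so for that decomposition $\sum_i c(F_i)=3n/2$, whereas your lemma would demand $nk/3=n^2/4$; these are contradictory for every $n\ge 8$ divisible by $4$. Hence the averaged bound cannot hold for decompositions in general. It could conceivably hold under the additional hypothesis $k<\lceil 3n/4\rceil$, but any proof of that restricted statement would have to exploit the hypothesis in an essential way, which makes it essentially as hard as the conjecture itself, so the reduction buys nothing. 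There is also a rounding loss even if the lemma were granted: $k\ge 3(n-1)/4$ does not imply $k\ge\lceil 3n/4\rceil$ when $n\equiv 1\pmod 4$ (for $n=5$ it gives $k\ge 3$, not the required $k\ge 4$). Finally, note that any successful argument must use the geometry in an essential, quantitative way: abstractly, $K_n$ decomposes into $\lceil n/2\rceil+1$ star-forests (Akiyama--Kano), and Theorem~\ref{Thm2} shows that with $2$-star-forests the abstract optimum is exactly $\lceil 3n/4\rceil$; your LP-duality and hull-vertex-deletion ideas point in a sensible direction, but as stated they contain no verifiable content.
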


Note that in the example above, all star-forests had exactly two components.
A star-forest consisting of at most $k$ connected components (stars) is said to be a \emph{$k$-star-forest}. 

It is also an interesting open problem to determine the minimum number of plane $k$-star-forests that a complete (convex) geometric graph of $n$ vertices can be decomposed into. We do not even know the answer to the analogous question for abstract graphs.

\begin{prob}\label{Pr1}
Let $k$ and $n$ be fixed positive integers. What is the minimum number of $k$-star-forests that a complete graph $K_n$ of $n$ vertices can be decomposed into?
\end{prob}

As was mentioned earlier, for $k=1$, the minimum is $n-1$. The following result settles the first nontrivial case.

\begin{thm}\label{Thm2}
  The complete graph with $n>3$ vertices can be decomposed into $\lceil 3n/4\rceil$ 2-star-forests. This bound cannot be improved.
\end{thm}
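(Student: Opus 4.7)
My plan is to establish Theorem~\ref{Thm2} in two parts: an explicit upper-bound construction and a structural lower-bound argument.

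\textbf{Upper bound ($\le \lceil 3n/4 \rceil$ 2-star-forests suffice).} I would first treat the base case $n = 4k$. Partition $V$ into four size-$k$ parts $A = \{a_1, \ldots, a_k\}$, $B = \{b_1, \ldots, b_k\}$, $C = \{c_1, \ldots, c_k\}$, $D = \{d_1, \ldots, d_k\}$, and produce $3k$ 2-star-forests in three families of $k$ forests each. The first family's $i$th forest is the disjoint union of the star at $a_i$ with leaves $B \cup \{a_{i+1}, \ldots, a_k\}$ and the star at $c_i$ with leaves $D \cup \{c_{i+1}, \ldots, c_k\}$. The second family does the analogous job with $B, D$ in the roles of $A, C$, covering the internal edges of $B$ and $D$ together with the $B$-$C$ and $A$-$D$ crossing edges. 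The third family's $i$th forest consists of the star at $a_i$ with leaves $C$ and the disjoint star at $b_i$ with leaves $D$, picking up exactly the remaining $A$-$C$ and $B$-$D$ edges. A direct edge count confirms that these $3k$ forests decompose $E(K_n)$, and the two stars in each forest live in disjoint unions of two of the parts, so they are automatically vertex-disjoint. For $n = 4k + r$ with $r \in \{1, 2, 3\}$, I would add $r$ new vertices and absorb most of their incident edges as new leaves in the existing stars (at most one added leaf per star, so the two stars in a forest remain vertex-disjoint); the leftover edges can be packed into $r$ additional 1-star forests, yielding $3k + r = \lceil 3n/4 \rceil$ forests in total.

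\textbf{Lower bound ($\ge \lceil 3n/4 \rceil$ forests are required).} Suppose $K_n$ decomposes into $m$ 2-star-forests. I would associate to each forest its (at most two) centers, obtaining a multigraph $H$ on $V$ with $|E(H)| = m$ and $\deg_H(v) = c(v)$, the number of forests in which $v$ is a center. The key rigidity is that for any forest $F$ with centers $c_1, c_2$, the edge $c_1 c_2$ cannot lie in $F$, so it lies in some other forest where one of $c_1, c_2$ is again a center; in multigraph terms this says no component of $H$ is a single edge. Combined with $c(v) \ge 1$ for every $v$ (otherwise $v$'s $n-1$ edges require $n-1$ distinct forests with $v$ as a leaf, forcing $m \ge n-1$), distinguishing \emph{heavy} vertices ($c \ge 2$) from \emph{light} ones ($c = 1$) and double-counting edges of $H$ already gives $m \ge \lceil 2n/3 \rceil$.

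\textbf{The main obstacle} is strengthening this to the sharp bound $\lceil 3n/4 \rceil$. The extra leverage I would exploit is a star-capacity constraint: a light vertex $v$ is a center in a unique forest $F_v$ with some partner $h$, and since $v$'s $n - 1$ incident edges are distributed over $F_v$ and the $m - 1$ other forests (in each of which $v$ can contribute at most one edge as a leaf), $v$'s star in $F_v$ has at least $n - m$ leaves, which forces $h$'s star in $F_v$ to have at most $m - 2$ leaves. Summing this reduced capacity over all heavy-light forests, and feeding it into the global identity $\sum_v N(v) = \binom{n}{2}$ together with the first-step inequality $a \ge 2n - 2m$ for the number $a$ of light vertices, should yield a sharpened bound that collapses to $m \ge \lceil 3n/4 \rceil$. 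Making the chain of inequalities tighten exactly at $\lceil 3n/4 \rceil$, and separately handling small cases such as $n = 6, 7$ where the ceiling is active via a direct case analysis on center-pair configurations, is where the delicate technical work lies.
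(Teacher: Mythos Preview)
Your upper-bound construction is essentially the same four-block idea as the paper's, and it works. The paper handles $n\not\equiv 0\pmod 4$ more uniformly by taking the four parts of nearly equal size from the outset and using surjections between them, which avoids your somewhat loose ``absorb extra leaves'' step, but your version can be made rigorous.

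The lower bound is where there is a genuine gap. You correctly build the center-pair multigraph $H$ and extract the two structural facts (no isolated vertex, no single-edge component), and those do yield $m\ge\lceil 2n/3\rceil$ via ``each component has at least two edges''. But the final push to $\lceil 3n/4\rceil$ through the capacity inequalities $|\text{star at }v|\ge n-m$ and $|\text{star at }h|\le m-2$ is only a plan, not a proof, and you flag it yourself as ``delicate technical work''. It is in fact unclear that summing these bounds can ever reach $3n/4$: the two inequalities together only recover the per-forest edge bound $\le n-2$, which by itself gives nothing beyond $m\ge n/2$, so a genuinely new ingredient would be needed, and the promised ``small cases $n=6,7$ by hand'' does not supply it.

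The paper sidesteps all of this. After reducing (via a minimal-counterexample argument) to the situation where every forest has exactly two centers, it notes that $H$ has at most $\lceil 3n/4\rceil-1$ edges and hence at least $n-\lceil 3n/4\rceil+1$ components; since $3\bigl(n-\lceil 3n/4\rceil+1\bigr)>\lceil 3n/4\rceil-1$, pigeonhole gives a component with at most two edges. A short local case analysis (isolated vertex, single edge, two-edge path) then shows that some $K_n$-edge among that component's vertices is not covered by any forest. So the idea you are missing is: do not attempt a global count --- use pigeonhole to locate a tiny component of $H$ and argue locally there. Your ``no single-edge component'' observation is exactly one of the three cases of that local analysis; you only needed to add the $0$-edge and $2$-edge cases and the pigeonhole step in front.
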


In particular, this shows that any counterexample to Conjecture \ref{conj:star_forests} would require the use of star-forests with more than 2 components.

Many other variants of decomposing complete geometric graphs have been studied in the literature, including decompositions into plane spanning trees. The conjecture that every complete geometric graph on $2m$ vertices can be decomposed into $m$ plane spanning trees has been recently disproved in~\cite{AOOPSSTV}. Several notions of thickness studied in~\cite{DuW07} are concerned with decompositions of graphs into plane substructures. For many other interesting questions on abstract and geometric graph parameters, consult~\cite{ArDH05}.

In Sections~\ref{sec:2} and \ref{sec:3}, we prove Theorems~\ref{Thm1} and \ref{Thm2}, respectively.

\section{Covering with plane star-forests--Proof of Theorem~\ref{Thm1}}\label{sec:2}
Recall that a \emph{plane star-forest} is a star-forest which is a plane graph, i.e., its edges do not cross each other.
In this section, in a slight abuse of notations, we will denote the complete convex geometric graph on $n$ points as $K_n$.
Instead of \emph{decompositions} of $K_n$ into plane star-forests, it will be more convenient to consider \emph{coverings}, that is, to allow an edge to belong to more than one star-forest (to have more than one ``color''). This does not change the problem, because by keeping just one color for each edge, we turn any covering of the edge set of $K_n$ into a decomposition.

\begin{defin}
A collection of plane star-forests, $F_1,F_2, \ldots, F_t$ forms a \emph{covering} of $K_n$ if every edge of $K_n$ belongs to \emph{at least one} $F_i$.
\end{defin}

For the proof, we need to introduce some simple terminology. The graphs consisting of just one vertex or a single edge are also regarded as stars. Every star $S$ has a \emph{center}. If $S$ is a vertex, then it is its own center. If $S$ is a single edge, we arbitrarily fix one of its endpoints and call it the center of $S$. The center of a star $S$ is also said to be the \emph{center of any edge} of $S$. Accordingly, if $F$ is a (plane) star-forest, we always assume that each of its components is a star with a \emph{fixed center}. 

\begin{proof}[Proof of Theorem~\ref{Thm1}]
For $n=1,2$, the statement is trivial. Assume for contradiction and let $n\ge 3$ be the smallest number for which the statement is not true. Let $K_n$ be a complete convex geometric graph, and denote its vertices by $P_1, P_2,\ldots, P_n$, in clockwise order. The indices are taken modulo $n$, so that $P_{n+1}=P_1, P_{n+2}=P_2$, etc.

Suppose that $K_n$ is covered by $t$ plane star-forests, $F_1, F_2,\ldots, F_{t}$, for some $t<n-1$. Our goal is to move some edges from one star-forest to another (i.e., to ``recolor'' them) in order to turn at least one $F_i$ into a single star. We make sure that after each step of this process, we obtain a covering of $K_n$ with plane star-forests. As soon as one of the $F_i$s becomes a single star, we remove its center from $K_n$, and contradict with $n$ being the smallest number for which we have a covering of $K_n$ with fewer than $n-1$ plane star-forests.

For every $a, \, 1\le a\le n,$ and for every $k, \, 1<k<n$, we call the edge $P_aP_{a+k}$ a $k$-edge. Note that every $k$-edge is also a $(n-k)$-edge. 

\begin{defin}\label{supported}
    A $k$-edge $P_aP_{a+k}$ is called \emph{supported} if there exists $F_i$ such that $P_aP_{a+k}$ belongs to $F_i$, and
    \begin{enumerate}[label=(\roman*)]
    \item either all edges $P_aP_{a+1},P_aP_{a+2},\ldots,P_aP_{a+k-1}$ belong to $F_i$,
    
    \item or all edges $P_{a+1}P_{a+k},P_{a+2}P_{a+k},\ldots,P_{a+k-1}P_{a+k}$ belong to $F_i$.
    \end{enumerate}
Otherwise, we call it \emph{unsupported}.
\end{defin}

The goal is to recolor the edges step by step in order to make all the edges supported. For this purpose, the following observation is useful for the recoloring process.

\begin{obs}\label{movingedges}
Suppose that the complete geometric graph $K_n$ is covered by $t$ plane star-forests, $F_1,F_2,\ldots F_t$. Let $S$ be a connected component of $F_i$ (that is, a star) where $1 \leq i \leq t$. Assume that no edge of $S$ crosses an edge of $F_j$ where $1 \leq j \leq t$, $j \neq i$. Remove the edges in $S$ from $F_i$ and add them to $F_j$. Then any edge that was supported before is still supported.
\end{obs}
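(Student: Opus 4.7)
The plan is to verify the claim by a short case analysis on which plane star-forest $F_\ell$ witnessed the support of a given edge $e$ before the move. The only structural fact I will need is the defining property of a forest: two distinct connected components are vertex-disjoint.

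Fix an edge $e = P_aP_{a+k}$ that is supported before the move, and let $F_\ell$ together with the apex $P_a$ (Definition \ref{supported}(i), say) be a witness; the case of a witness via (ii), with apex $P_{a+k}$, is entirely symmetric. If $\ell \notin \{i,j\}$, then $F_\ell$ is not touched by the operation and $e$ trivially remains supported by $F_\ell$. If $\ell = j$, the operation only adds edges to $F_j$, so all of $P_aP_{a+1}, \ldots, P_aP_{a+k-1}$ together with $e$ still lie in $F_j$ afterwards, and the support persists.

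The only substantive case is $\ell = i$. Let $S^\ast$ be the connected component of $F_i$ containing $e$. Since the edges $P_aP_{a+1}, \ldots, P_aP_{a+k-1}$ all lie in $F_i$ and share the endpoint $P_a$ with $e$, they all sit inside the same component $S^\ast$. The star $S$ being moved is either equal to $S^\ast$, or else is a distinct component of the forest $F_i$, and therefore vertex-disjoint from $S^\ast$. In the first subcase, the entire fan together with $e$ migrates as a block from $F_i$ to $F_j$, and $F_j$ now witnesses the support of $e$. In the second subcase, removing $S$ from $F_i$ does not touch any edge of $S^\ast$, so $F_i$ continues to witness the support of $e$.

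I do not expect any genuine obstacle here: the only small conceptual point is to notice that the ``fan'' witnessing the support, being a set of edges through a common vertex $P_a$, automatically lives inside a single connected component of $F_i$, so when one component is relocated the fan either travels with it as a whole or is completely unaffected. Hence every supported edge stays supported.
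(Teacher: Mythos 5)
Your proof is correct. The paper states this as an \emph{Observation} and offers no proof at all, treating it as self-evident; your case analysis on the witnessing forest $F_\ell$ --- with the key point that the supporting fan shares the vertex $P_a$ with $e$ and hence lies in a single component of $F_\ell$, so it either migrates to $F_j$ as a whole (when $\ell=i$ and $S$ is that component) or is left untouched --- is exactly the routine verification the authors implicitly have in mind.
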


\begin{lem}\label{CLAIM} Suppose that the complete geometric graph $K_n$ can be covered by $t$ plane star-forests, for some positive integer $t$.

Then, for every $k$, $1<k<n$, there exists a covering of $K_n$ by $t$ plane star-forests $F_1,F_2\ldots, F_{t}$ such that every $k'$-edge with $1< k'\leq k$ is supported.
\end{lem}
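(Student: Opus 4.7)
The plan is to prove Lemma~\ref{CLAIM} by induction on $k$, modifying the covering while maintaining the invariant that all shorter edges remain supported.

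For the base case $k=2$, starting from any plane star-forest covering $F_1, \ldots, F_t$, I would repeatedly pick an unsupported $2$-edge $e = P_aP_{a+2}$ and make it supported. If $e \in F_i$ with $P_a$ as center of its star, the goal is to place the $1$-edge $P_aP_{a+1}$ into $F_i$. Since $P_aP_{a+1}$ lies on the boundary of the convex hull, it crosses no edge of $K_n$; together with a short case analysis on the star $T\subseteq F_\ell$ currently containing $P_aP_{a+1}$, this allows an application of Observation~\ref{movingedges} to transfer the relevant star into $F_i$. Because Observation~\ref{movingedges} guarantees that no previously supported edge loses its support, the number of unsupported $2$-edges strictly decreases at every step and the procedure terminates.

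For the inductive step, assume the lemma holds for $k-1$, so we may work with a covering in which every $k'$-edge with $1<k'<k$ is supported. Consider an unsupported $k$-edge $e = P_aP_{a+k} \in F_i$; up to symmetry, assume $P_a$ is the center of its star in $F_i$. By the inductive hypothesis, the $(k-1)$-edge $P_aP_{a+k-1}$ is supported in some $F_\ell$ via one of the two options of Definition~\ref{supported}. In the favorable case, the whole fan $P_aP_{a+1}, P_aP_{a+2}, \ldots, P_aP_{a+k-1}$ lies in $F_\ell$ as a single star centered at $P_a$. The crucial geometric observation is that the inner vertices $P_{a+1}, \ldots, P_{a+k-1}$ are all leaves of that one star, hence each has degree exactly $1$ in $F_\ell$. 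In particular, no edge of $F_\ell$ can link an inner vertex to the outer side of the chord $e$, so $e$ is crossing-free with respect to $F_\ell$. I would then transfer $e$ from $F_i$ to $F_\ell$, appending $P_{a+k}$ as a new leaf of the star at $P_a$, which makes $e$ supported while, by Observation~\ref{movingedges}, preserving the support of every previously supported edge. The opposite case, where $P_aP_{a+k-1}$ is supported at the other end $P_{a+k-1}$, is handled symmetrically, this time by examining the $(k-1)$-edge $P_{a+1}P_{a+k}$, which the inductive hypothesis also guarantees to be supported.

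The main obstacle I foresee is the star-forest property at the vertex $P_{a+k}$ when transferring $e$ into $F_\ell$: if $P_{a+k}$ already lies in another component of $F_\ell$, the naive extension creates a non-star component. I would handle this by a preliminary cascading move, first relocating $P_{a+k}$'s component in $F_\ell$ via Observation~\ref{movingedges}; the same degree-$1$ argument for the inner vertices ensures this component is insulated from $e$, so the move is feasible. Verifying that such cascades terminate without disturbing the already-supported edges of length less than $k$, and closing off the symmetric case cleanly, is where the delicate bookkeeping lies.
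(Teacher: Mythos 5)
Your overall strategy (induction on $k$, fixing one unsupported edge at a time, appealing to Observation~\ref{movingedges} to preserve previously supported edges) matches the paper, and your base case is essentially the paper's. However, your inductive step has two genuine gaps.

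First, your case analysis does not close. You consult the support of the $(k-1)$-edge $P_aP_{a+k-1}$, and in the unfavorable case you pass to the $(k-1)$-edge $P_{a+1}P_{a+k}$. But that edge may itself be supported at the ``wrong'' end: it is entirely possible that $P_aP_{a+k-1}$ is supported only via a fan centered at $P_{a+k-1}$ \emph{and} $P_{a+1}P_{a+k}$ is supported only via a fan centered at $P_{a+1}$. Then you hold fans at $P_{a+1}$ and at $P_{a+k-1}$, while Definition~\ref{supported} for $P_aP_{a+k}$ demands a fan at $P_a$ or at $P_{a+k}$, and no single move you describe produces one. The paper avoids this dead end by a different mechanism: it takes the \emph{largest} $l$ with $P_1P_l\notin F_i$ (so that $P_1P_{l+1},\dots,P_1P_{k+1}$ are all in $F_i$), and in its Case~2 it does not try to finish in one step at all; it inserts $P_1P_l$ into $F_i$ while \emph{keeping} it in $F_j$ (exploiting that a covering, unlike a decomposition, allows an edge two colors), then iterates with the next largest missing index $l'<l$, terminating after fewer than $k$ rounds.

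Second, even your favorable case is incomplete. Your planarity argument (the inner vertices $P_{a+1},\dots,P_{a+k-1}$ have degree $1$ in $F_\ell$, so nothing of $F_\ell$ crosses $e$) is correct, but the real obstruction is the one you flag yourself: $P_{a+k}$ may already have positive degree in $F_\ell$, so adding $e$ merges two components into a non-star. Your proposed fix --- relocate $P_{a+k}$'s component of $F_\ell$ into some other forest via Observation~\ref{movingedges} --- requires a target forest $F_j$ none of whose edges crosses any edge of that component, and moreover that the union stays a star-forest (Observation~\ref{movingedges} only promises preservation of supportedness; the paper always verifies planarity and the star property separately). That component can be arbitrary and far from $e$, so no such target need exist, and nothing in the induction hypothesis provides one; the ``insulated from $e$'' remark is beside the point, since the conflict is with the \emph{target} forest, not with $e$. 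The paper sidesteps exactly this difficulty in its Case~2 by deleting from $F_i$ only edges incident to $P_l$ that are provably already contained in the red star centered at $P_l$ (a consequence of convex position and of $P_{l+1},\dots,P_{k+1}$ being blue leaves), so the covering property is never endangered. Without a replacement for that argument, your cascade may fail to start, and the induction does not go through.
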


\begin{proof} We prove the lemma by induction on $k$. 

Suppose that $k=2$. By symmetry, it is sufficient to consider the $2$-edge $P_1P_3$ (that is, $a=1)$. We can assume without loss of generality that $P_1P_3$ belongs to $F_i$, for some $i$, and its center is $P_1$ (which implies that $P_2P_3$ is not in $F_i$). If $P_1P_2$ belongs to $F_i$, condition (i) in Definition \ref{supported} is satisfied, and we are done. If $P_1P_2$ does not belong to $F_i$, then add it to $F_i$. Obviously, it cannot cross any other edge in $F_i$. The only problem that may occur is that until now $P_2$ was a single vertex star in $F_i$, and now $F_i$ has two stars that have a point in common. In this case, simply erase the single vertex star $P_2$ from $F_i$. Thus, the lemma is true for $k=2$.




\smallskip

Suppose next that $k>2$ and 
the statement has already been verified for $k-1$. We want to prove it for $k$. 

By symmetry, it is enough to consider the $k$-edge $P_1P_{k+1}$ and make it supported without making the already supported edges unsupported. Suppose without loss of generality that $P_1P_{k+1}$ belongs to a star in $F_i$ and the center of this star is $P_1$. The edges in $F_i$ are marked \emph{blue}. 
\smallskip

Let $l<k+1$ be the \emph{largest} index such that $P_1P_l$ does \emph{not} belong to $F_i$. Then the edges $P_1P_{l+1}, \ldots, P_1P_{k+1}$ are all blue. If there is no such index $l$, then we are done, because $F_i$ satisfies condition (i) in Definition \ref{supported}.

By the induction hypothesis the edge $P_1P_l$ is supported, so there exists a star-forest $F_j,\, j\neq i,$ which contains $P_1P_l$ along with all the edges $P_1P_2, P_1P_3, \ldots, P_1P_{l-1}$ or along with all the edges $P_2P_l,$ $P_3P_l,\ldots,$ $P_{l-1}P_l$. The edges of $F_j$ are marked \emph{red}. We distinguish two cases depending on these two possibilities.
\smallskip

\noindent\textbf{Case 1:} \emph{The edges $P_1P_2,P_1P_3,\ldots,P_1P_{l}$ belong to $F_j$.}

We make two changes. See Figure 1. 
\smallskip
\begin{figure}[htb]
  \centering \vspace{0.0in}
  \includegraphics[width=1.00\textwidth]{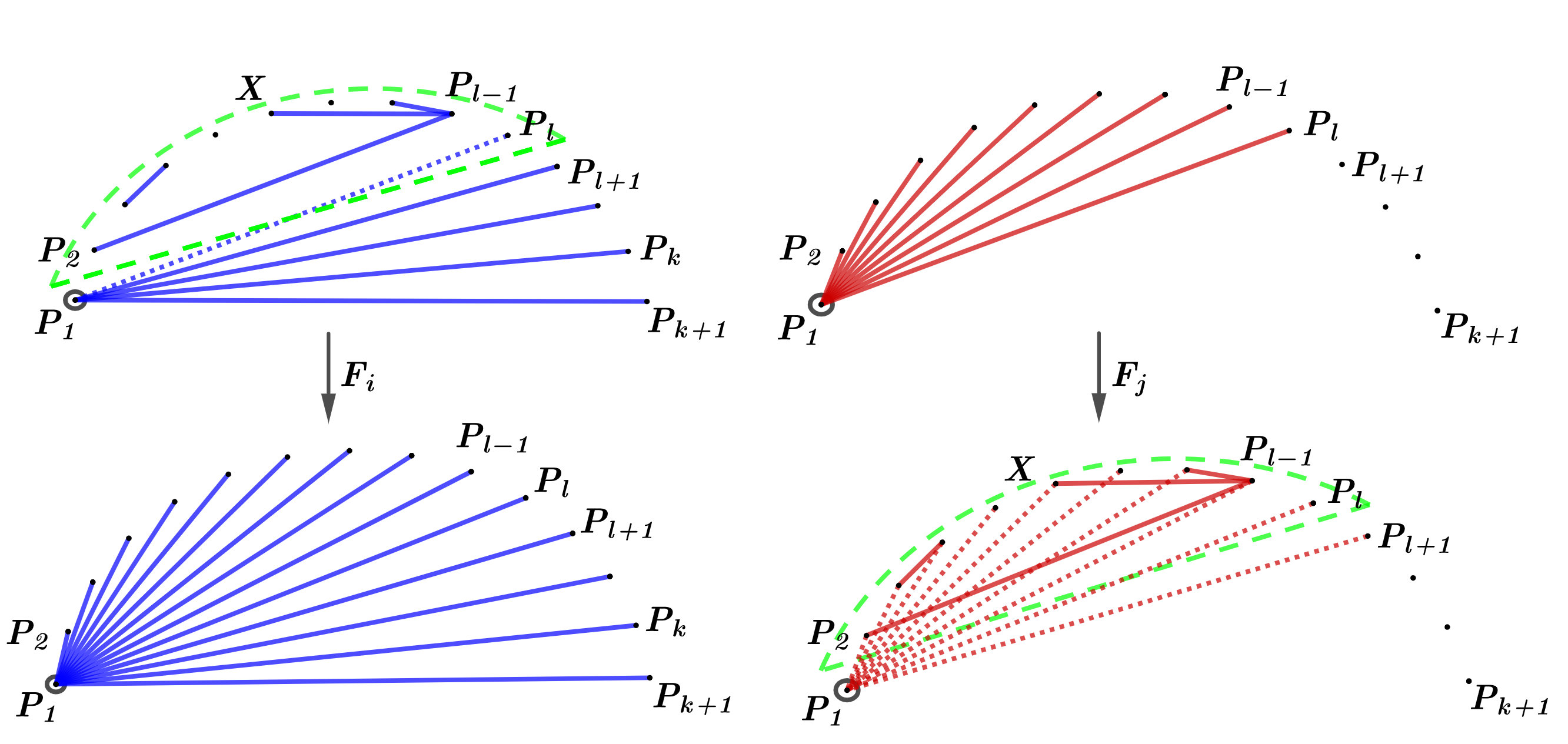}
  \caption{In Case 1, recolor $P_1P_2,\ldots, P_1P_l$ from red to blue, and all blue stars spanned by $\{P_2,\ldots, P_l\}$ to red. A dotted line marks the absence of an edge.}
  \label{fig:Case1}
\end{figure}

\noindent \textsc{Step 1}: Remove the edges $P_1P_2,P_1P_3, \ldots, P_1P_l$ from $F_j$ and add all of them to $F_i$ (unless they were already in $F_i$).

Then $P_1P_{k+1}$ will satisfy condition (i) of definition \ref{supported} in $F_i$ (with $a=1$). However, in the process, we may have created some crossings within $F_i$, and $F_i$ may also cease to be a star-forest. Both of these problems can be avoided by performing
\smallskip

\noindent \textsc{Step 2}: Remove from $F_i$ all (blue) edges connecting two elements of $\{P_2,P_3,\ldots,P_l\}$ and add them to $F_j$. 


Note that by recoloring the blue edges within $\{P_2,P_3,\ldots,P_l\}$ to red, we do not violate the condition that $F_j$ is a plane star-forest. Indeed, unless $l=2$, originally, no element of $\{P_2,P_3,\ldots,P_l\}$ was connected by a red edge to any vertex other than $P_1$. Also by Observation \ref{movingedges}, neither of the two steps results in any previously supported edge becoming unsupported.
\smallskip

\noindent\textbf{Case 2:} \emph{The edges $P_2P_l,P_3P_l,\ldots,P_{l-1}P_l$ belong to $F_j$.}

First, we will modify $F_i$ by including the edge $P_1P_l$. This will require some care, to make sure that the new covering does not violate the conditions. See Figure 2.
\smallskip
\begin{figure}[htb]
  \centering \vspace{0.0in}
  
  \includegraphics[width=1.00\textwidth]{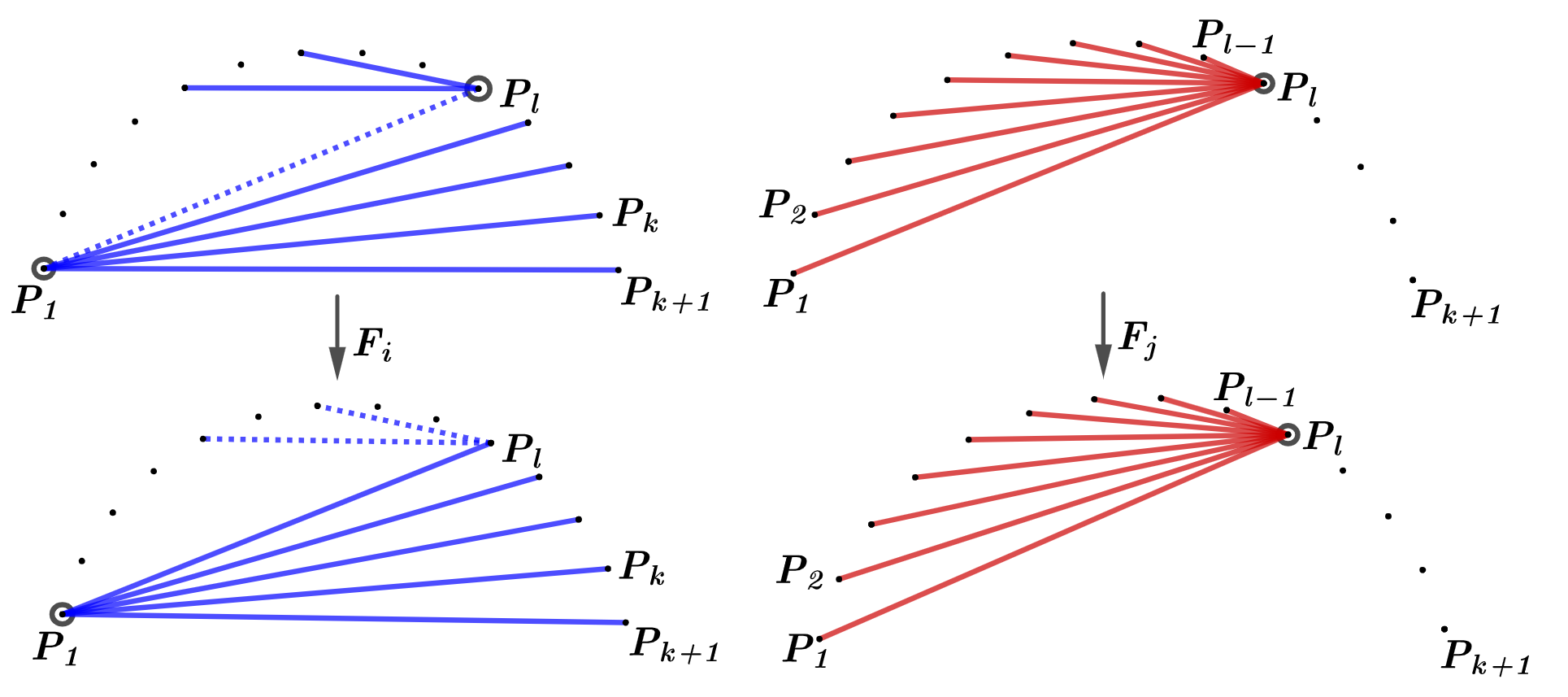}
  \caption{In Case 2, $P_1P_l$ will have two colors: red and blue. Remove the color blue from all previously blue edges incident to $P_l$.}
  \label{fig:Case2}
\end{figure}

\noindent \textsc{Step 1:}  Add the edge $P_1P_l$ to $F_i$, but also keep it in $F_j$. Remove from $F_i$ all other edges incident to $P_l$. 
\smallskip

Notice that after performing this step, we still have a covering of $K_n$ by plane star-forests. It is a \emph{covering}, because all edges deleted from $F_i$ also belonged, and continue to belong, to $F_j$. Obviously, $F_i$ remains a \emph{star-forest}: its component containing $P_1$ remains a star, because we removed from $F_i$ any other edge incident to $P_l$. Finally, $F_i$ remains a \emph{plane graph}, because its newly added edge, $P_1P_l$ cannot cross any other blue edge. Indeed, such an edge should be incident to $P_{l+1}$, contradicting our assumption that $P_1P_{k+1}$ originally belonged to a star in $F_i$, whose center is $P_1$.
Also note that edges incident to $P_l$ in $F_i$ form a connected component which is already in $F_j$. So removing them is equivalent to recoloring them as red, which, by Observation \ref{movingedges}, does not make any already supported edge unsupported.

\smallskip

Now we go back to the beginning of the proof, and again find the largest index $l'$ such that $P_1P_{l'}$ does not belong to $F_i$. Obviously, we have $l'<l$. As before, we distinguish two cases. In Case 1, we conclude that $P_1P_{k+1}$ satisfies condition (i) of Definition \ref{supported} in $F_i$ (with $a=1$), and we are done with the induction step. In Case 2, we can include the edge $P_1P_{l'}$ in $F_i$. Continuing like this, in fewer than $k$ steps, we arrive at a situation where either $P_1P_{k+1}$ satisfies condition (i) of definition \ref{supported} in $F_i$, or one by one, we manage to include all of the edges $P_1P_{k+1}, P_1P_k, \ldots, P_1P_3, P_1P_2$ in $F_i$, which again means that $P_1P_{k+1}$ satisfies condition (i) of definition \ref{supported} in $F_i$.
This completes the proof of Lemma~\ref{CLAIM}.
\end{proof}

Applying the lemma with $k=n-1$ and $a=1$, we can construct a covering of $K_n$ by fewer than $n-1$ plane star-forests such that one of them, again denoted by $F_i$, has the property that either $P_1P_2, P_1P_3,\ldots, P_1P_n$ belong to $F_i$, or $P_1P_n, P_2P_n, \ldots, P_{n-1}P_n$ belong to $F_i$. That is, $F_i$ is a single star of degree $n-1$, centered at $P_1$ or $P_n$. Deleting $P_1$ or $P_n$, resp., from $K_n$, we obtain a covering of $K_{n-1}$ with fewer than $n-2$ plane star-forests, which contradicts our assumption that Theorem~\ref{Thm1} is true for decompositions and, hence, for coverings of the complete convex geometric graph $K_{n-1}$. This completes the proof of Theorem~\ref{Thm1}.
\end{proof}

\section{2-Star-Forests--Proof of Theorem~\ref{Thm2}}\label{sec:3}

\begin{proof}
Let $V$ be an $n$-element set, and let $V=V_1\cup V_2\cup V_3\cup V_4$ be a partition of $V$ into $4$ subsets as equal as possible. Suppose without loss of generality that
\[\lfloor n/4\rfloor\le |V_1|\le|V_2|\le|V_3|\le|V_4|\le\lceil n/4\rceil.\]
Let $f : V_2\rightarrow V_1$ be a surjection (onto mapping). For every $u\in V_2$, consider the two-star-forest $F_u$ consisting of all edges connecting $u$ to a every vertex in $V_2\cup V_3$, and connecting $f(u)$ to every vertex in $V_1\cup V_4$. These two-star-forests completely cover all edges within $V_2$ and $V_1$, and all edges in $V_1\times V_4$ and in $V_2\times V_3$. In a similar manner, we can construct $|V_4|$ two-star-forests that cover all edges within $V_4$ and $V_3$, and all edges in $V_4\times V_2$ and $V_3\times V_1$. Finally, with $|V_3|$ two-star-forests (with one center in $V_3$ and one in $V_1$), we can cover all edges in $V_3\times V_4$ and $V_1\times V_2$. Thus, we covered $K_n$ with $|V_2|+|V_3|+|V_4|=\lceil 3n/4\rceil$ two-star-forests, as required.
\smallskip

Next, we show that $K_n$ cannot be covered by fewer than $\lceil 3n/4\rceil$ two-star-forests, for any $n\ge 4$. The case $n = 4$ is easy. The proof is by contradiction. Let $n$ be the smallest value greater than $4$ for which there exists a covering of $K_n$ by $t \leq \lceil 3n/4\rceil -1$  two-star-forests. Denote the two-star-forests participating in such a covering by $F_1,\ldots, F_t$. If any $F_i$ has only one center, then deleting it from $K_n$, together with all edges incident to it, we reduce the number of vertices by $1$ and the number of two-star-forests by $1$. This would contradict the minimal choice of $n$. Thus, we can and will assume that every $F_i,\ 1\le i\le t,$ has two centers.




Now consider a graph $G$ with the same set of vertices as $K_n$, and for every 2-star-forest $F_i$, draw an edge in $G$ between the two centers of stars in $F_i$. The resulting graph $G$ has at most $\lceil 3n/4\rceil -1$ edges and, therefore, at least $n - \lceil 3n/4\rceil + 1$ connected components. Note that $3(n - \lceil 3n/4\rceil + 1) > \lceil 3n/4\rceil -1$, so there exists a connected component $C$ in $G$ with fewer than $3$ edges.

If $C$ is a single vertex $u$, then by construction it cannot be the center of any two-star-forest. Thus, we would need at least $n-1$ two-star-forests just to cover the edges incident to $u$ in $K_n$. If $C$ consists of only one edge $u_1u_2$, then neither of these vertices can be the center of any other two-star-forest. Thus, the edge $u_1u_2$ was not covered by any two-star-forest $F_j$, which is a contradiction. Finally, if $C$ consists of two edges, $u_1u_2$ and  $u_1u_3$, say, then it is not difficult to see that at least one of the edges between $u_1,u_2,u_3$ in $K_n$ is not covered by any two-star-forest $F_j$. In each of the above cases, we obtained a contradiction. This completes the proof of Theorem~~\ref{Thm2}.   \end{proof}


In view of Theorem \ref{Thm2}, we state the following conjecture.

\begin{conj}\label{last}
For any $n \geq k\geq 2$, the number of $k$-star-forests needed to cover the complete graph $K_n$ is at least $\big \lceil \frac{(k+1)n}{2k} \big \rceil$.
\end{conj}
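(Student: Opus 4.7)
The plan is to mimic the proof of Theorem~\ref{Thm2}, arguing by induction on $n$. Assume for contradiction that $n$ is the minimum value for which $K_n$ admits a covering by $t\le \lceil (k+1)n/(2k)\rceil - 1$ $k$-star-forests $F_1,\dots,F_t$, and let $C_i$ denote the set of centers of $F_i$. First, I would perform a reduction: if some $F_i$ consists of a single spanning star with center $c$, deleting $c$ gives a covering of $K_{n-1}$ by $t-1$ $k$-star-forests, and since $(k+1)/(2k)<1$ we have $t-1 \le \lceil (k+1)(n-1)/(2k)\rceil - 1$, contradicting minimality. Hence we may assume $|C_i|\ge 2$ for every $i$.

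Second, observe that the labels $A(v)=\{i : v\in C_i\}$ must be pairwise distinct: if $A(u)=A(v)$, then in every $F_i$ the endpoints $u,v$ are simultaneously centers or simultaneously non-centers, so $uv$ belongs to no $F_i$, contradicting coverage. Third, the edges among the centers of any single $F_i$ cannot be covered by $F_i$ itself, so at most one vertex $v$ can satisfy $A(v)=\{i\}$ for any fixed $i$. Combining these observations with $\sum_v |A(v)|=\sum_i |C_i|\le tk$, a simple packing of $n$ distinct nonempty subsets of $[t]$ with total size at most $tk$ already yields the weaker bound $t\ge 2n/(k+1)$.

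To reach the conjectured bound, I would exploit the star-forest structure more carefully. For each vertex $v$ with $d(v)=|A(v)|$ and each $F_i$ with $v\in C_i$, let $L_i(v)$ denote the leaves of $v$'s star in $F_i$. Since each non-center occupies a single star in each $F_i$, one obtains the pointwise inequality
\begin{equation*}
\Big|\bigcup_{i\in A(v)} L_i(v)\Big| \;\ge\; n-1-\bigl(t-d(v)\bigr),
\end{equation*}
because the remaining neighbors of $v$ must appear as leaves in distinct star-forests in which $v$ itself is not a center. The plan is to sum these inequalities carefully across $v$, combining them with the fact that each $F_i$ fails to cover the $\binom{|C_i|}{2}$ edges among its centers, to derive a global inequality equivalent to $t\ge (k+1)n/(2k)$.

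The main obstacle, I expect, is that the two individual constraints above---the pointwise inequality and the distinct-labels condition---only yield the weaker bounds $t\ge n(n-1)/(2(n-k))\approx n/2$ and $t\ge 2n/(k+1)$, respectively. Closing the gap to $(k+1)n/(2k)$ seems to require a genuinely multi-vertex argument in the spirit of the small-component analysis from the proof of Theorem~\ref{Thm2}, but adapted to a hypergraph setting in which the number of hyperedges may exceed $n/(k-1)$, so that the naive component count fails already for $k\ge 3$. A promising direction is to define a weighted or fractional analogue of the auxiliary graph $G$ used for $k=2$ on which a component-counting argument can still be carried out; verifying that a ``small'' component in this analogue always forces an uncoverable edge of $K_n$ is the step I anticipate needing the most care.
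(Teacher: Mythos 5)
The statement you are attempting is Conjecture~\ref{last}, which the paper leaves \emph{open}: the paper establishes it only for $k=2$ (that is exactly Theorem~\ref{Thm2}) and complements it with a construction showing the bound, if true, is best possible. So there is no proof in the paper to compare yours against; the only question is whether your proposal settles the conjecture, and it does not --- as you yourself acknowledge.

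Your preliminary observations are correct. The reduction eliminating one-star forests works, and the arithmetic $\lceil (k+1)n/(2k)\rceil - 1 \le \lceil (k+1)(n-1)/(2k)\rceil$ is fine since $(k+1)/(2k)\le 1$. The label sets $A(v)$ must indeed be pairwise distinct, because an edge whose endpoints are simultaneously centers or simultaneously non-centers in every $F_i$ is covered by none of them. The pointwise inequality $\bigl|\bigcup_{i\in A(v)} L_i(v)\bigr| \ge n-1-(t-d(v))$ also holds, since $v$ is a leaf of at most one star in each forest in which it is not a center. But, as you compute, these ingredients only give $t\gtrsim n/2$ and $t\ge (2n-2)/(k+1)$, both strictly below $(k+1)n/(2k)=n/2+n/(2k)$; the entire content of the conjecture is the extra $n/(2k)$ term, and for that your proposal offers only a hope (``a weighted or fractional analogue of the auxiliary graph $G$''), not an argument. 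You correctly diagnose why the paper's $k=2$ mechanism does not lift: the auxiliary structure for general $k$ is a hypergraph with $t$ hyperedges of size at most $k$, and since $t(k-1)$ can exceed $n$ once $k\ge 3$, the count of connected components gives nothing; moreover, even if a small component existed, you never show it forces an uncovered edge of $K_n$ --- edges among the centers of one $F_i$ can be covered by other forests whose centers lie in the same component, so the case analysis ending the proof of Theorem~\ref{Thm2} has no direct analogue. In short, the steps you verify are sound but provably insufficient, the crucial lemma is neither formulated nor proved, and the statement remains a conjecture; this is a genuine gap.
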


For $k=2$, the conjecture is true, by Theorem \ref{Thm2}. We construct an example inspired by the construction in \cite{AkK85}, showing that Conjecture~\ref{last}, if true, is best possible. 
For simplicity, we describe it only for the case where $n$ is divisible by $2$. 
Assuming $n=2t$, and labeling the vertices by $\{v_1,v_2,\cdots,v_n\}$, we create $t$ $2$-star-forests $F_1,F_2,\cdots F_t$ by picking vertices $v_i$ and $v_{i+t}$ as centers of $F_i$, $1\leq i \leq t$ and connecting $v_i$ to all vertices $v_j$, $i<j<i+t$, and connecting $v_{i+t}$ to all vertices $v_{j+t}$, $i<j<i+t$ (the indices are taken modulo n). The introduced $2$-star-forests cover all edges of $K_n$, except the set of edges $v_iv_{i+t}$ which can be simply decomposed into $\lceil \frac{n}{2k} \rceil$ $k$-star-forests.
Altogether, $K_n$ can be covered by $\frac{n}{2} + \lceil \frac{n}{2k} \rceil$ $k$-star-forests.


\end{document}